\documentclass{amsart}
\usepackage{graphicx}
\usepackage{array}
\usepackage{epsfig}
\usepackage{epstopdf}
\usepackage{amsmath, amsfonts,amssymb,amsthm}
\usepackage{subfigure}
\theoremstyle{plain}
\newtheorem{lem}{Lemma}[section]
\newtheorem{thm}[lem]{Theorem}
\newtheorem{prop}[lem]{Proposition}
\newtheorem{cor}[lem]{Corollary}

\theoremstyle{definition}
\newtheorem{defn}{Definition}[section]

\newtheorem{exmp}{Example}[section]

\begin{document}

\title{On Brylawski's generalized duality}

\author{Gary Gordon}
\address{Dept. of Mathematics\\
	Lafayette College\\
   	Easton, PA  18042-1781}
 \email{gordong@lafayette.edu}

\maketitle

\begin{abstract}  We introduce a notion of duality (due to Brylawski) that generalizes matroid duality to arbitrary rank functions.  This generalized duality allows for  generalized operations (deletion and contraction) and a generalized polynomial based on the matroid Tutte polynomial.  This polynomial satisfies a  deletion-contraction recursion.  We explore this notion of duality for greedoids, antimatroids and demi-matroids, proving that matroids correspond precisely to objects that are simultaneously greedoids and ``dual'' greedoids.
\end{abstract}

\maketitle
\section{Introduction}  \label{S:intro}
Duality plays a central role in graph theory and matroid theory.  While only planar graphs have graphic duals, all matroids have duals.  Since graphs are matroids, and since geometric duality for planar graphs coincides with matroid duality when the graph is planar, we can view matroid duality as a way to define duals for non-planar graphs.

A matroid can be described by its rank function, and this rank function can then be used to define the three operations of deletion, contraction and duality.  Brylawski \cite{br} realized it is possible to extend all three of these operations  to arbitrary ``rank'' functions.  If $r:2^S \to \mathbb{Z}$ is any  function, then one can define a dual structure via a dual rank function $r^*$:
$$r^*(A)=|A|+r(S-A)-r(S).$$
Then deletion can be defined as a restriction of the rank function $r$:
$$r_{G-p}=r|_{G-p},$$ and contraction can be defined using deletion and duality:
$$G/p=(G^*-p)^*.$$  

This extends the idea of duality to non-matroidal structures. These definitions (along with some background on matroids) and basic results are given in Section~\ref{S:def}.  While this generalized duality is difficult to interpret combinatorially (in particular, we can have $r^*(A)<0$ for a subset $A$), we can prove several generalizations of well-known formulas involving deletion, contraction and duality. 

Others have worked on characterizing matroid duality through its  properties.  Kung~\cite{k} shows that matroid duality is the {\it only} involution on the class of matroids that interchanges deletion and contraction:  $(G-p)^*=G^*/p$ and $(G/p)^*=G^*-p.$  Kung's approach is generalized by Whittle~\cite{wh}, who extends duality to {\it $k$-polymatroids}, where the dual rank function satisfies $f^*(A)=k|A|+f(S-A)-f(S)$.  This also interchanges deletion and contraction, and has the involution property $f^{**}=f$.  Bland and Dietrich~\cite{bd} also investigate duality,  concentrating on involutions, but also considering the class of oriented matroids..

Our primary motivation in this work is the close connection to the Tutte polynomial, the subject of Section~\ref{S:tutte}.   When $M$ is a matroid, the Tutte polynomial $t(M;x,y)$ is generally defined in one of two equivalent ways:   via a subset expansion, or inductively, through a deletion-contraction recursion.  The subset expansion uses the rank function:
$$t(M;x;y)=\sum_{A \subseteq S}(x-1)^{r(S)-r(A)}(y-1)^{|A|-r(A)},$$
and this will allow us to define a ``Tutte polynomial'' in this general setting.

The matroid version of the deletion-contraction recursion is the following  formula: $$t(M;x,y)= t(M-p;x,y)+t(M/p;x,y).$$   This formula is generalized in Theorem~\ref{T:tutte-gen}(1) in Section~\ref{S:tutte}:
$$f(G;t,z)=t^{r(G)-r(G-p)}f(G-p;t,z)+z^{1-r(p)}f(G/p;t,z).$$
  Theorem~\ref{T:tutte-gen}(1)   also  generalizes the deletion-contraction formula for the Tutte polynomial of a greedoid (Proposition 2.5 of \cite{gm}).   Theorem~\ref{T:tutte-gen}(2) shows this general Tutte polynomial is well-behaved with respect to  generalized duality (assuming $r(\emptyset)=0$): $$f(G^*;t,z)=f(G;z,t).$$

{\it Greedoids} are generalizations of matroids, and there are many interesting combinatorial structures that have meaningful interpretations as greedoids, but not matroids.  (For instance, trees form greedoids, but the matroid associated with a tree is trivial.)  In Section~\ref{S:greedoid}, we examine our generalized duality for greedoids.  The main results are a characterization of the rank axioms dual greedoids satisfy (Theorem~\ref{T:dual-greedoid1}) and a result that shows $\mathcal{G}\cap \mathcal{G}^*=\mathcal{M}$, where $\mathcal{G}$ is the class of all greedoids, $\mathcal{G}^*$ is the class of all greedoid {\it duals}, and $\mathcal{M}$ is the class of all matroids (Theorem~\ref{T:dual-greedoid2}).

In Section~\ref{S:matroid-gen}, we conclude by considering applications to {\it antimatroids} (a well-studied class of greedoids) and {\it demi-matroids}, another matroid generalization  introduced recently in \cite{bjms}.  For antimatroids, we interpret the dual rank combinatorially in terms of {\it convex closure}  (Theorem~\ref{T:anti-rank}).  For demi-matroids, we examine the connection between our generalized duality and these objects, characterizing precisely the properties the rank function $r$ must satisfy to produce a demi-matroid (Theorem~\ref{T:demi}).

Finally, I offer my  gratitude to Tom Brylawski (1944--2007) for many fruitful discussions on this topic.  This approach to duality is due to him, and many of the results given were originally proven by him.  His influence on this author goes well beyond the present work, and this 
paper is dedicated to his memory.  A memorial volume  of the {\it European Journal of Combinatorics} includes a tribute to Tom and his work  \cite{thb}.

\section{Definitions}\label{S:def}
\subsection{Matroids via the rank function}
There are many {\it cryptomorphically} equivalent ways to define a matroid. For instance, among other formulations, matroids can be defined via independent sets, bases, circuits, or flats.  In this paper, we use the rank function.

\begin{defn} A \textit{matroid} $M$ is a pair $(S,r)$ where $S$ is a finite set and $r:2^S \to \mathbb{Z}^+\cup\{0\}$ such that:
\begin{enumerate}
\item [(R0)] $r(\emptyset)=0$ [\textit{normalization}]
\item [(R1)] $r(A) \leq r(A \cup p) \leq r(A)+1$ [\textit{unit rank increase}]
\item [(R2)] $r(A\cap B)+r(A \cup B) \leq r(A)+r(B)$ [\textit{semimodularity}]
\end{enumerate}

\end{defn}
$S$ is the {\it ground set} of the matroid.   Assuming (R0) and (R1), we can replace (R2) with

\textit{Local semimodularity:}

\begin{enumerate}
\item [(R2$'$)] If $r(A)=r(A \cup p_1)=r(A\cup p_2)$, then $r(A\cup\{p_1,p_2\})=r(A).$ 
\end{enumerate}

If $M$ is a matroid on the ground set $S$, it is straightforward to define independent sets, spanning sets and bases directly from the rank function:
\begin{itemize}
  \item $I \subseteq S$ is {\it independent} if and only  if $r(I)=|I|$.
  \item $T \subseteq S$ is {\it spanning} if and only if $r(T)=r(S)$.
  \item $B \subseteq S$ is a {\it basis} if and only if $B$ is independent and spanning.
\end{itemize}
Thus, $B \subseteq S$ is a  basis of the matroid $M$ if $|B|=r(B)=r(S)$.

Three important operations  motivated by graph theory can be  defined for all matroids: duality, deletion, and contraction.  These are usually defined in terms of independent sets or bases, but it is possible to define all three operations via the rank function.


\begin{defn}\label{D:dual1} Let $M=(S,r)$ be a matroid.  Then define the dual matroid $M^*$  as follows:
$M^*=(S,r^*)$, where $r^*(A)=|A|+r(S-A)-r(S).$
\end{defn}

Using this definition, one can prove $r^*$ satisfies (R0), (R1) and (R2), and so defines a matroid.  One can also show that $B$ is a basis for $M^*$ if and only if $B=S-B'$, where $B'$ is a basis for $M$, i.e., the bases for $M^*$ are the complements of the bases of $M$.  (This is the way $M^*$ is usually  defined.)

We can also define deletion and contraction via the rank function and duality.

\begin{defn} \label{D:dc1} Let $M=(S,r)$ be a matroid, and let $p \in S$.  
\begin{enumerate}
\item \textit{Deletion}: $M-p=(S-p,r')$, where $r'(A)=r(A)$ for any $A \subseteq S-p$.
\item \textit{Contraction}: $M/p=(M^*-p)^*$.
\end{enumerate}

\end{defn}
Thus, both $M-p$ and  $M/p$ are matroids on the ground set $S-p$.  Again, one can prove this definition coincides with the (more familiar) definitions of deletion and contraction (in graphs or matroids) via independent sets or bases.  In particular, one can easily prove  $(M-p)^*=M^*/p$ and $(M/p)^*=M^*-p$.  Further, it is straightforward to show $r_{M/p}(A)=r(A \cup p)-r(p),$ where $r_{M/p}$ is the rank function of the contraction $M/p$ (and $r$ is the rank function of the original matroid $M$) -- see Theorem~\ref{T:contract-rank}.

\subsection{Generalized duality, deletion and contraction}
Brylawski observed that the definitions of the dual matroid (Definition~\ref{D:dual1}) and  deletion and contraction (Definition~\ref{D:dc1}) do not depend on the properties (R0), (R1) and (R2) that characterize the rank function of a matroid.  This leads to the next definition, Brylawski's generalized duality, deletion and contraction.

\begin{defn}\label{D:gens} Let $G=(S,r)$, where $S$ is a finite set and   $r:2^S \to \mathbb{Z}$ is a function satisfying $r(\emptyset)=0$.  Define duality, deletion and contraction:
\begin{itemize}
\item \textit{Duality} $G^*=(S,r^*)$, where $r^*(A):=|A|+r(S-A)-r(S)$.
\item \textit{Deletion} $G-p=(S-p,r')$, where $r'(A)=r(A)$ for all $A \subseteq S-p$.
\item \textit{Contraction} $G/p:=(G^*-p)^*$.
\end{itemize}
\end{defn}

This definition of duality has the usual involution property:  $(G^*)^*=G$.  We omit the routine proof.

\begin{prop}\label{P:G**}
Let $S$ be a finite set and $r:2^S \to \mathbb{Z}$ be any function  satisfying $r(\emptyset)=0$, where $G=(S,r)$.  Then $(G^*)^*=G$.
\end{prop}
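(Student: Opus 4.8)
The plan is to verify the identity at the level of rank functions by direct substitution. Since the dual operation fixes the ground set $S$, the objects $(G^*)^*$ and $G$ are both pairs of the form $(S, \cdot)$, so proving $(G^*)^* = G$ amounts to showing $(r^*)^*(A) = r(A)$ for every $A \subseteq S$. I would simply unfold the definition of $r^*$ twice and simplify, so the entire argument is a short computation rather than anything structural.

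Concretely, I would start from
$$(r^*)^*(A) = |A| + r^*(S-A) - r^*(S),$$
and then substitute the two values coming from the inner dual. For the first, $r^*(S-A) = |S-A| + r\bigl(S-(S-A)\bigr) - r(S) = |S-A| + r(A) - r(S)$, using that the complement of $S-A$ in $S$ is $A$. For the second, $r^*(S) = |S| + r(\emptyset) - r(S)$. Substituting both expressions and collecting terms, the two occurrences of $r(S)$ cancel, leaving $(r^*)^*(A) = |A| + |S-A| + r(A) - |S| - r(\emptyset)$.

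To finish I would invoke two elementary facts: the partition identity $|A| + |S-A| = |S|$, which collapses the cardinality terms, and the normalization hypothesis $r(\emptyset) = 0$, which kills the last term. Together these reduce the right-hand side to exactly $r(A)$, completing the proof.

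There is no real obstacle here; the only point that requires any attention is that the hypothesis $r(\emptyset) = 0$ is genuinely used, and used exactly once, namely in evaluating $r^*(S)$. Without it one would instead obtain $(r^*)^*(A) = r(A) - r(\emptyset)$, so the involution property fails by a constant shift. Flagging where the normalization enters is the one thing worth being explicit about, since every other step is just bookkeeping with cardinalities.
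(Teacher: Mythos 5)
Your proof is correct and is precisely the routine direct computation that the paper itself omits (the paper states the result with ``We omit the routine proof''): unfold $r^*$ twice, use $|A|+|S-A|=|S|$, and invoke $r(\emptyset)=0$. Your observation that the normalization hypothesis is used exactly once, in evaluating $r^*(S)$, and that without it one gets the constant shift $(r^*)^*(A)=r(A)-r(\emptyset)$, is a worthwhile point that goes slightly beyond what the paper records.
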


The next result is useful for computing the rank of a subset in $G/p$, and will also be needed in our proof of a deletion-contraction recursion for the Tutte polynomial (Theorem~\ref{T:tutte-gen}(1)).

\begin{thm}[Brylawski] \label{T:contract-rank}  Let $G=(S,r)$, where $r:2^S\to \mathbb{Z}$  satisfies $r(\emptyset)=0$, and let $p \in S$.  Let $r_{G/p}$ be the rank function for $G/p$.  Then, for all $p \in S$ and $A \subseteq S-p$, 
$$r_{G/p}(A)=r(A \cup p)-r(p).$$
\end{thm}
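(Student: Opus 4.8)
The plan is to simply unwind the definition $G/p=(G^*-p)^*$ from the inside out, applying the dual-rank formula twice with a deletion sandwiched in between, and then let the algebra collapse. First I would record the rank function of $G^*$: by Definition~\ref{D:gens}, $r^*(X)=|X|+r(S-X)-r(S)$ for every $X\subseteq S$. Deleting $p$ keeps the ground set $S-p$ and, by the deletion clause of Definition~\ref{D:gens}, restricts $r^*$ unchanged, so the rank function of $G^*-p$ is just $r^*|_{S-p}$.

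Next I would apply the dual operation a second time, now with respect to the smaller ground set $S-p$. This is the step that requires care: the outer dual in $(G^*-p)^*$ takes complements relative to $S-p$, not relative to $S$. Thus for $A\subseteq S-p$,
\[
r_{G/p}(A)=|A|+r^*\bigl((S-p)-A\bigr)-r^*(S-p),
\]
where $(S-p)-A=S-(A\cup p)$ because $p\notin A$.

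It then remains only to evaluate the two $r^*$ terms with the formula from the first step and simplify. For the first term, $S-\bigl((S-p)-A\bigr)=A\cup p$ and $|(S-p)-A|=|S|-|A|-1$, so that $r^*\bigl((S-p)-A\bigr)=(|S|-|A|-1)+r(A\cup p)-r(S)$. For the second, $S-(S-p)=\{p\}$, giving $r^*(S-p)=(|S|-1)+r(p)-r(S)$. Substituting these into the displayed expression, the contributions of $|A|$, $|S|$, the $\pm 1$, and $r(S)$ all cancel, leaving exactly $r(A\cup p)-r(p)$, as claimed.

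The computation is entirely mechanical once the definitions are unwound; the only genuine obstacle — and the spot where a careless argument goes astray — is the bookkeeping of complements. Because contraction is a deletion sandwiched between two dualities, the inner dual takes complements relative to $S$ while the outer dual takes them relative to $S-p$. Correctly identifying $(S-p)-A$ with $S-(A\cup p)$, and referencing $r^*(S-p)$ to the deleted structure rather than to the original ground set, is the crux; after that the cancellation is immediate.
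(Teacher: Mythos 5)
Your proof is correct and follows essentially the same route as the paper's: unwind $G/p=(G^*-p)^*$, apply the dual-rank formula twice (the outer dual relative to $S-p$), and let the terms cancel. In fact your bookkeeping is slightly more careful than the paper's own display, which writes $r(S-p)$ where $r(S)$ belongs in the two intermediate $r^*$ evaluations --- a harmless slip there, since those terms cancel anyway.
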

\begin{proof}
Let $A \subseteq S-p$.  Then, in the dual $G^*$, we have $r^*(A)=|A|+r(S-A)-r(S)$.  This formula remains valid in $G^*-p$ (even though the sets $S-A$ and $S$ both contain $p$).  

Now let $r'$ be the rank function for $G^*-p$ and note that $r'$ is defined on the set $S-p$.  Then, for $p \notin A$, compute the rank function in $G/p=(G^*-p)^*$ as follows:
\begin{eqnarray*}
r_{G/p}(A) &=& |A|+r'((S-p)-A)-r'(S-p) \\
&=& |A|+\Big( |(S-p)-A|+r(S-(S-p-A))-r(S-p)\Big) - \\ 
&& \Big(|S-p| +r(S-(S-p))-r(S-p)\Big) \\
&=& r(A\cup p)-r(p).
\end{eqnarray*}

\end{proof}

\begin{exmp}\label{E:tree1}
Let $S=\{a,b,c\}$ and define the rank function as in Table~\ref{Ta:ex1-dual}.  This is the {\it branching greedoid} associated to the rooted tree of Figure~\ref{F:tree1}.  (Section~\ref{S:greedoid} gives more background information on greedoids.)  Then the rank of a subset of edges $A$ is the size of the largest rooted subtree contained in $A$.  For instance, we have $r(\{b,c\})=1$ because $c$ is the largest rooted subtree contained in $\{b,c\}$.

Then $r$ is {\it not} the rank function of a matroid because, for example, $r(\{b,c\})=1$ while $r(S)=3$, so the unit rank increase matroid property (R1) is violated.   We use Definition~\ref{D:dual1} to find the rank for the dual $G^*$ -- see the last row of Table~\ref{Ta:ex1-dual}.

\begin{table}[h]
\begin{center}
\begin{tabular}{|c||c|c|c|c|c|c|c|c|} \hline
$A$ &$\emptyset$ & $a$ & $b$ &  $c$ & $ab$ &  $ac$ & $bc$ &  $abc$  \\ \hline
$r(A)$ &$0$ & $1$ & $0$ &  $1$ & $2$ &  $2$ & $1$ &  $3$  \\ \hline
$r^*(A)$  &$0$ & $-1$ & $0$ &  $0$ & $0$ &  $-1$ & $0$ &  $0$  \\ \hline
\end{tabular}
\end{center}
\medskip
\caption{Rank function for rooted tree $G$ of Figure~\ref{F:tree1} and its generalized dual $G^*$.} \label{Ta:ex1-dual}
\end{table}%

\vspace{-.3in}
\begin{figure}[h]
\centerline{\includegraphics[width=.8in]{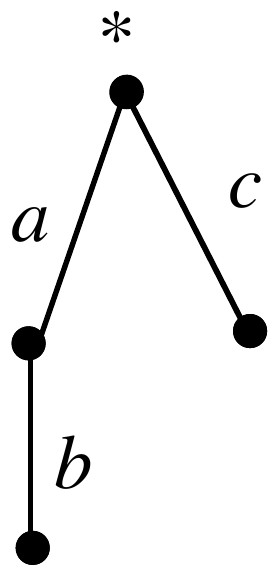}}
\caption{Rooted tree $G$ for Example~\ref{E:tree1}.}
\label{F:tree1}
\end{figure}

For instance, the dual rank $r^*(\{a,c\})=|\{a,c\}|+r(b)-r(S)=2+0-3=-1$.  This gives another way to see that $r$ is not the rank function of a matroid, since, if it were, the dual $G^*$ would also be a matroid.  But $r(A)<0$ is impossible for matroids.

For the deletion $G-a$, we simply compute the rank function by restricting $r$ to subsets avoiding $a$.  The rank in the contraction can be computed via duality (as in Definition~\ref{D:dc1}), or directly from Theorem~\ref{T:contract-rank}.  See Table~\ref{Ta:ex1-dc}.

\begin{table}[h]
\begin{center}
\begin{tabular}{|c||c|c|c|c|} \hline
$A$ &$\emptyset$ & $b$ &  $c$ &   $bc$   \\ \hline
rank in $G-a$ &$0$ & $0$ & $1$ &  $1$   \\ \hline
rank in $G/a$  &$0$ & $1$ & $1$ &  $2$   \\ \hline
\end{tabular}
\end{center}
\medskip
\caption{Rank function for $G-a$ and $G/a$ for the rooted tree of Figure~\ref{F:tree1}.} \label{Ta:ex1-dc}
\end{table}%

For rooted graphs,  one can check that these definitions of deletion and contraction via the rank function correspond  to the usual graph theoretic operations of deletion and contraction of edges. 

\end{exmp}

It is straightforward to generalize direct sums to arbitrary rank functions.

\begin{defn}\label{D:dirsum} Let $G_1=(S_1,r_1)$ and $G_2=(S_2,r_2)$ where $S_1$ and $S_2$ are disjoint sets.  For $A_i \subseteq S_i$, define $r(A_1\cup A_2)=r_1(A_1)+r_2(A_2)$.  Then $G_1 \oplus G_2=(S_1\cup S_2,r)$ is the {\it direct sum} of $G_1$ and $G_2$.

\end{defn}
We omit the immediate proof of the next proposition.

\begin{prop}\label{P:dirsum}
Let $G_1\oplus G_2$ be the direct sum of $G_1$ and $G_2$.  Then $$(G_1\oplus G_2)^*=G_1^*\oplus G_2^*.$$
\end{prop}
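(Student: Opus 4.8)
The plan is to show that the two structures on the common ground set $S := S_1 \cup S_2$ have identical rank functions, since both $(G_1 \oplus G_2)^*$ and $G_1^* \oplus G_2^*$ are pairs of the form $(S, \cdot)$ with the same ground set. So it suffices to verify that their rank functions agree on every subset. Fix an arbitrary $A \subseteq S$ and decompose it along the disjoint partition $S = S_1 \cup S_2$ by setting $A_i := A \cap S_i$, so that $A = A_1 \cup A_2$ with $A_i \subseteq S_i$.

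The key observation is that complementation respects this decomposition: because $S_1$ and $S_2$ are disjoint, $S - A = (S_1 - A_1) \cup (S_2 - A_2)$, with the two pieces lying in $S_1$ and $S_2$ respectively. First I would apply Definition~\ref{D:gens} to compute the dual rank $r^*(A) = |A| + r(S - A) - r(S)$, where $r$ is the direct-sum rank function of $G_1 \oplus G_2$. Then I would substitute the three ingredients $|A| = |A_1| + |A_2|$, the value $r(S - A) = r_1(S_1 - A_1) + r_2(S_2 - A_2)$ (obtained by applying Definition~\ref{D:dirsum} to the decomposition of $S - A$ just noted), and $r(S) = r_1(S_1) + r_2(S_2)$.

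Next I would regroup the resulting sum by index $i$, which separates cleanly into
$$r^*(A) = \big(|A_1| + r_1(S_1 - A_1) - r_1(S_1)\big) + \big(|A_2| + r_2(S_2 - A_2) - r_2(S_2)\big).$$
Each bracketed term is exactly $r_i^*(A_i)$ by the definition of the dual rank function on $G_i$, so $r^*(A) = r_1^*(A_1) + r_2^*(A_2)$. Finally, applying Definition~\ref{D:dirsum} to $G_1^* \oplus G_2^*$ gives its rank on $A$ as $r_1^*(A_1) + r_2^*(A_2)$ as well, matching $r^*(A)$. Since $A$ was arbitrary, the rank functions coincide and the two structures are equal.

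There is no substantial obstacle here; the proof is a direct computation exploiting that the dual-rank formula is \emph{linear} in the underlying rank function together with the additivity supplied by Definition~\ref{D:dirsum}. The only point demanding genuine care is the bookkeeping of complements, namely verifying that $S - A$ splits as $(S_1 - A_1) \cup (S_2 - A_2)$ with the parts in the correct ground sets, so that Definition~\ref{D:dirsum} legitimately applies to evaluate $r(S - A)$.
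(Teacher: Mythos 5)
Your proof is correct: the paper omits this proof as ``immediate,'' and your direct computation---splitting $A$ and its complement along the partition $S = S_1 \cup S_2$ and using the additivity of Definition~\ref{D:dirsum} together with the linearity of the dual-rank formula---is exactly the intended argument. No gaps; the bookkeeping of complements, which you flag as the one point of care, is handled correctly.
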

\section{The Tutte polynomial}\label{S:tutte}
The Tutte polynomial is an important two-variable invariant for graphs and matroids.  An extensive introduction to this polynomial can be found in \cite{bo}.  We now extend the definition of the Tutte polynomial by using an arbitrary rank function $r:2^S \to \mathbb{Z}$.

\begin{defn}\label{D:gentutte}
Let $r:2^S \to \mathbb{Z}$ be any function.  Then define a  function $f(G;t,z)$ for $G=(S,r)$:
$$f(G;t,z)=\sum_{A \subseteq S}t^{r(S)-r(A)}z^{|A|-r(A)}.$$
\end{defn}
Definition~\ref{D:gentutte} generalizes the Tutte polynomial of a matroid to an arbitrary rank function.  We do not assume any special properties for the  function $r$.  The exponent $r(S)-r(A)$ is the {\it corank} of $A$, and $|A|-r(A)$ is the {\it nullity} of $A$.

\begin{exmp}
Let $G=(S,r)$ for $S=\{a,b\}$ with the rank function $r:2^S \to \mathbb{Z}$  given as follows:  $r(\emptyset)=3, r(a)=-1, r(b)=7$ and $r(S)=2$.  Then $$f(G;t,z)=\frac{1}{t^5 z^6}+\frac{1}{t z^3}+t+1.$$
Thus, $f(S;t,z)$ need not be a polynomial.
\end{exmp}

We will generally assume $r(\emptyset)=0$; this ensures $r^*(\emptyset)=0$, but is also needed to prove $G^{**}=G$ (Proposition~\ref{P:G**}).  Further, writing $f(G;t,z)=\sum_{A \subseteq S} t^kz^m$, it is evident that 
\begin{itemize}
\item $k \geq 0$ for all $A \subseteq S$ if and only if $r(A) \leq r(S)$ for all $A \subseteq S$ (rank $S$ maximum), and
\item $m \geq 0$  for all $A \subseteq S$ if and only if $r(A) \leq |A|$ for all $A \subseteq S$ (subcardinal).
\end{itemize}
Thus, $f(G;t,z)$ will be a polynomial precisely when the rank function satisfies these two properties.

Applying Definition~\ref{D:gens} to this polynomial, we  get the following.

\begin{thm} [Brylawski] \label{T:tutte-gen} Let $r:2^S \to \mathbb{Z}$ be any function satisfying $r(\emptyset)=0$.  Then 
\begin{enumerate}
  \item Deletion-contraction:  For any $p \in S$, $$f(G;t,z)=t^{r(G)-r(G-p)}f(G-p;t,z)+z^{1-r(p)}f(G/p;t,z),$$
  where $G-p$ and $G/p$ are defined in Definition~\ref{D:gens}.
  \item Duality:  Let $G^*$ be the dual of $G$ in the sense of Definition~\ref{D:gens}.  Then $$f(G^*;t,z)=f(G;z,t).$$ 
\end{enumerate}

\end{thm}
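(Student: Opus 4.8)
The plan is to prove both parts directly from the subset-expansion definition of $f$, splitting the sum in part (1) according to whether $p$ lies in the summed set $A$, and making a complementation substitution $A\mapsto S-A$ in part (2).

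For part (1), I would partition the index set of $f(G;t,z)=\sum_{A\subseteq S}t^{r(S)-r(A)}z^{|A|-r(A)}$ into the subsets $A$ with $p\notin A$ and those with $p\in A$. The first group is indexed exactly by the subsets of $S-p$, and since the deletion $G-p$ has rank function $r'=r|_{S-p}$ with ground-set rank $r(G-p)=r(S-p)$, each such term equals $t^{r(S)-r(S-p)}$ times the corresponding term of $f(G-p;t,z)$. Factoring out the common $t^{r(S)-r(S-p)}=t^{r(G)-r(G-p)}$ yields the first summand. For the second group I would write $A=A'\cup\{p\}$ with $A'\subseteq S-p$, so that $|A|=|A'|+1$ and $r(A)=r(A'\cup p)$. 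Here the key tool is Theorem~\ref{T:contract-rank}: since $r_{G/p}(A')=r(A'\cup p)-r(p)$, I can express the corank $r(S)-r(A'\cup p)$ and nullity $|A'|+1-r(A'\cup p)$ of these terms through $r_{G/p}$. A short computation shows the corank exponent matches the corank of $A'$ in $G/p$ exactly, while the nullity exponent exceeds the $G/p$-nullity by $1-r(p)$; factoring out $z^{1-r(p)}$ then produces $z^{1-r(p)}f(G/p;t,z)$, the second summand.

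For part (2), I would compute the corank and nullity of an arbitrary $A$ in the dual $G^*$. Using $r(\emptyset)=0$ we get $r^*(S)=|S|-r(S)$, and substituting $r^*(A)=|A|+r(S-A)-r(S)$ gives corank $r^*(S)-r^*(A)=|S-A|-r(S-A)$ and nullity $|A|-r^*(A)=r(S)-r(S-A)$. Thus $f(G^*;t,z)=\sum_{A\subseteq S}t^{|S-A|-r(S-A)}z^{r(S)-r(S-A)}$. Re-indexing by the complement $B=S-A$ (a bijection on $2^S$) turns this into $\sum_{B\subseteq S}t^{|B|-r(B)}z^{r(S)-r(B)}$, which is precisely $f(G;z,t)$ after swapping the roles of $t$ and $z$ in the definition of $f$.

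I expect the main obstacle to be the exponent bookkeeping in part (1), specifically locating where the factor $z^{1-r(p)}$ comes from. The subtlety is that the shift by $r(p)$ in Theorem~\ref{T:contract-rank} affects the nullity but cancels cleanly in the corank, and this asymmetry --- together with the $+1$ from $|A|=|A'|+1$ --- is exactly what forces the nullity discrepancy to be $1-r(p)$ rather than something involving $r(S)$. Part (2) is essentially a clean complementation identity once the dual corank and nullity are computed, and the hypothesis $r(\emptyset)=0$ enters only through the evaluation $r^*(S)=|S|-r(S)$.
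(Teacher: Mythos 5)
Your proposal is correct and follows essentially the same route as the paper's own proof: both split the subset expansion according to whether $p\in A$, invoke Theorem~\ref{T:contract-rank} to show the corank is preserved while the nullity shifts by exactly $1-r(p)$ in the contraction terms, and prove duality by computing the dual corank and nullity and re-indexing over complements using $r(\emptyset)=0$. No gaps; the exponent bookkeeping you flag as the main obstacle is handled exactly as in the paper.
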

\begin{proof}
(1) 
Let  $r'$ be  the rank function in $G-p$ and $r''$ the rank function in $G/p$.  Then,  
  \begin{itemize}
  \item Deletion:  $r'(A)=r(A)$ for all $A \subseteq S-p$.  
  \item Contraction:   If $p \in A$, then $r''(A-p)=r(A)-r(p)$.  (This follows from Theorem~\ref{T:contract-rank}.)
\end{itemize}
 We break up the subsets of $S$ into two classes:  Let $\mathcal{S}_1$ be the collection of all subsets of $S$ containing $p$, and $\mathcal{S}_2$ be the collection of all subsets of $S$ avoiding $p$. 

  Then 
$$ f(G;t,z)=\sum_{A \in \mathcal{S}_1} t^{r(S)-r(A)} z^{|A|-r(A)}  +  \sum_{A \in \mathcal{S}_2} t^{r(S)-r(A)} z^{|A|-r(A)}.$$

{\sc Case 1}:   $A \in \mathcal{S}_1$.  Then  $r''(S-p)=r(S)-r(p)$ and $r''(A-p)=r(A)-r(p)$, so the corank of $A$ (computed in $G$) equals the corank of $A-p$ (computed in $G/p$):  $$r(S)-r(A)=r''(S-p)-r''(A-p).$$

For the nullity,  we have    $$|A|-r(A)=|A-p|+1-r''(A-p)-r(p).$$
Thus 
\begin{eqnarray*}
\sum_{A \in \mathcal{S}_1} t^{r(S)-r(A)} z^{|A|-r(A)} &=& \sum_{A  \in \mathcal{S}_1} t^{r''(S-p)-r''(A-p)} z^{(|A-p|-r''(A-p))+(1-r(p))} \\
&=& z^{1-r(p)} \sum_{B \subseteq S-p} t^{r''(S-p)-r''(B)} z^{|B|-r''(B)} \mbox{ (for }B=A-p) \\
&=&z^{1-r(p)}f(G/p;t,z).
\end{eqnarray*}

{\sc Case 2}:   $A \in \mathcal{S}_2$.  Then $p \notin A, r(S)=r'(S-p)$ and $r(A)=r'(A)$.  We compute the corank of $A$ in both $G$ and $G-p$:
\begin{eqnarray*}
r(S)-r(A)&=&r(S)-r(S-p)+r(S-p)-r(A) \\
&=&(r(S)-r(S-p))+(r'(S-p)-r'(A)).
\end{eqnarray*}

For the nullity, there is no change this time:
$|A|-r(A)=|A|-r'(A)$.  Thus
\begin{eqnarray*}
\sum_{A \in \mathcal{S}_2} t^{r(S)-r(A)} z^{|A|-r(A)} &=&t^{r(S)-r(S-p)} \sum_{A  \in \mathcal{S}_2} t^{r'(S-p)-r'(A)} z^{|A|-r'(A)} \\
&=&t^{r(S)-r(S-p)}f(G-p;t,z).
\end{eqnarray*}
Combining these two cases gives us our deletion-contraction recursion.

(2)  Note that, for any $A \subseteq S$, we have  $r^*(S)-r^*(A)=|S-A|-r(S-A)$ (since $r(\emptyset)=0$) and  $|A|-r^*(A)=r(S)-r(S-A)$.  The result then follows from Definition~\ref{D:gens}(1).

\end{proof}

The deletion-contraction recursion of Theorem~\ref{T:tutte-gen}(1) is a generalization of a greedoid version of this formula that appears as Proposition 2.5 in  \cite{gm}.  In that formula, the $z^{1-r(p)}$ coefficient of the contraction term does not appear since $r(p)=1$ for all points $p$ that we contract.  See Section~\ref{S:greedoid} below.

\begin{exmp}\label{E:tree2}  Returning to Example~\ref{E:tree1}, we first compute $f(G;t,z)$:

\begin{table}[h]
\begin{center}
\begin{tabular}{|c||c|c|c|c|c|c|c|c|} \hline
$A$ &$\emptyset$ & $a$ & $b$ &  $c$ & $ab$ &  $ac$ & $bc$ &  $abc$  \\ \hline
$r(A)$ &$0$ & $1$ & $0$ &  $1$ & $2$ &  $2$ & $1$ &  $3$  \\ \hline
Term &$t^3$ & $t^2$ & $t^3z$ &  $t^2$ & $t$ &  $t$ & $t^2z$ &  $1$   \\ \hline
\end{tabular}
\end{center}
\end{table}%

Then 
\begin{eqnarray*}f(G;t,z)&=&(t+1)(t^2 z+t^2+t+1),\\
 f(G-a;t,z)&=&1+t+z+tz, \\
 f(G/a;t,z)&=&(t+1)^2.
\end{eqnarray*}

In this case, $r(a)=1$ and $r(S)-r(S-a)=2$, and the reader can verify $f(G;t,z)=f(G/a;t,z)+t^2f(G-p;t,z),$ as required by \ref{T:tutte-gen}(1).

If, instead, we delete and contract $b$,  we find $f(G-b;t,z)=(t+1)^2$ and $f(G/b;t,z)=t^3+t^2+\frac{t}{z}+\frac{1}{z}$ (and so $f(G/b)$ is not a polynomial).  Now $r(b)=0$ and $r(S)-r(S-b)=1$, so \ref{T:tutte-gen} gives $f(G;t,z)=z f(G/b)+t f(G;t,z)$, which the reader can again verify.

For the dual $G^*$, we find:
\begin{table}[h]
\begin{center}
\begin{tabular}[b]{|c||c|c|c|c|c|c|c|c|} \hline
Subset &$\emptyset$ & $a$ & $b$ &  $c$ & $ab$ &  $ac$ & $bc$ &  $abc$  \\ \hline
Dual rank &$0$ & $-1$ & $0$ &  $0$ & $0$ &  $-1$ & $0$ &  $0$  \\ \hline
Dual term &$1$ & $tz^2$ & $z$ &  $z$ & $z^2$ &  $tz^3$ & $z^2$ &  $z^3$  \\ \hline
\end{tabular}
\end{center}
\end{table}

Thus $f(G^*;t,z)=f(G;z,t)$, and we remark that a term $t^mz^n$ of $f(G)$ corresponding to a subset $A$ gives rise to the term $t^nz^m$ in $f(G^*)$ corresponding to the subset $S-A$.

\end{exmp}

We remark that when $G$ is a matroid, then $r(G)=r(G-p)$ (provided $G$ is not an isthmus) and $r(p)=1$ (provided $p$ is not a loop).  Thus the recursion of Theorem~\ref{T:tutte-gen}(1) reduces to the familiar $f(G;t,z)=f(G-p;t,z)+f(G/p;t,z)$.  It is also possible to interpret this recursion for isthmuses and loops;  see \cite{gt} for one approach.

\section{Greedoids}\label{S:greedoid}
Greedoids are a generalization of matroids that were first introduced in \cite{kl}.  An extensive introduction appears in \cite{bz}.  Although there are fewer axiomatizations of greedoids than there are of matroids, it is still possible define greedoids from a  rank function.

\begin{defn}\label{D:greedoid} A {\it greedoid} $G$ is a pair $(S,r)$ where $S$ is a finite set and $r:2^S \to \mathbb{Z}^+\cup\{0\}$ such that:
\begin{enumerate}
\item [(Gr0)] $r(\emptyset)=0$ [normalization]
\item [(Gr1)] $r(A) \leq r(A \cup \{p\})$ [increasing]
\item [(Gr2)] $r(A) \leq |A|$ [subcardinal]
\item [(Gr3)] If $r(A)=r(A \cup p_1)=r(A\cup p_2)$, then $r(A\cup\{p_1,p_2\})=r(A).$ [local semimodularity]
\end{enumerate}
\end{defn}

Note that matroids satisfy these four properties, so matroids are greedoids.  We also remark that the greedoid normalization axiom (Gr0) is the same as the matroid axiom (R0), and the local semi-modularity for greedoids (Gr3) is identical to the matroid version (R2$'$).

When $r(A)=|A|$, we call $A$ a {\it feasible set} in the greedoid.  Thus, feasible sets in greedoids play the same role as independent sets in matroids.  {\it Bases} are defined to be maximal feasible sets, and, as with matroids, all bases have the same cardinality.  One important difference between matroids and greedoids is that a greedoid  is {\it not} uniquely determined by its collection of bases.  In fact, there are, in general, many non-isomorphic greedoids in which $S$ is a basis.  (A greedoid $G$ is {\it full} if $S$ is a basis of $G$, i.e., $r(G)=|S|$.)

As an example, consider the rooted tree of Example~\ref{E:tree1}.  This is a greedoid; more generally, if $G$ is a rooted graph, i.e., a graph with a distinguished vertex,  with edges $S$, we get a greedoid on the ground set $S$ by defining the feasible sets to be the rooted subtrees of $G$.  This is the {\it branching greedoid} associated to the rooted graph $G$.  Note that rooted trees are full greedoids in this context.

Deletion and contraction in greedoids are usually defined in terms of feasible sets.  

\begin{defn}\label{D:dc-greed}
Let $G$ be a greedoid on the ground set $S$.  For $p \in S$, define the feasible sets of the deletion $G-p$ and contraction $G/p$ as follows:
\begin{enumerate}
  \item Deletion:  $F \subseteq S-p$ is feasible in  $G-p$ if $F$ is feasible in $G$.
    \item Contraction:  $F\subseteq S-p$  is feasible in  $G/p$ if $F\cup p$ is feasible in $G$.
    \end{enumerate}
\end{defn}

Using Definition~\ref{D:dc-greed}, one can show $G-p$ is always a greedoid, but $G/p$ is a greedoid  if and only if $\{p\}$ is a feasible set (or $p$ is a greedoid loop).  

\begin{prop}\label{P:bad-contract}  Let $G=(S,r)$ be a greedoid and suppose $p \in S$ is in some feasible set $F$.  Then $G/p$ is a greedoid if and only if $p$ is feasible.
\end{prop}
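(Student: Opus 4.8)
The plan is to pass to feasible sets. By Definition~\ref{D:dc-greed}, the feasible sets of $G/p$ are $\mathcal{F}_{G/p}=\{F\subseteq S-p : F\cup p \text{ is feasible in } G\}$, and I would use the standard cryptomorphism (equivalent to (Gr0)--(Gr3)) that a set system is a greedoid exactly when its collection of feasible sets contains $\emptyset$, is accessible, and has the exchange property. Then $G/p$ is a greedoid if and only if $\mathcal{F}_{G/p}$ satisfies these three axioms, and I would check each implication against them.

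The forward implication is almost immediate. In any greedoid the empty set is feasible, since (Gr0) gives $r(\emptyset)=0=|\emptyset|$; so if $G/p$ is a greedoid then $\emptyset\in\mathcal{F}_{G/p}$, and by the definition of contraction this says precisely that $\emptyset\cup p=\{p\}$ is feasible in $G$, i.e.\ $p$ is feasible. Here the hypothesis that $p$ lies in some feasible set is not strictly needed; its role is to separate the statement from the greedoid-loop case, where contraction follows a different convention.

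For the reverse implication I assume $\{p\}$ is feasible and verify the three axioms for $\mathcal{F}_{G/p}$. That $\emptyset\in\mathcal{F}_{G/p}$ is exactly the assumption, since $\emptyset\cup p=\{p\}$. The exchange axiom transfers directly: if $X,Y\in\mathcal{F}_{G/p}$ with $|X|>|Y|$, then $X\cup p$ and $Y\cup p$ are feasible in $G$ with $|X\cup p|>|Y\cup p|$, so exchange in $G$ yields $z\in(X\cup p)\setminus(Y\cup p)=X\setminus Y$ (so $z\neq p$) with $(Y\cup z)\cup p$ feasible, that is $Y\cup z\in\mathcal{F}_{G/p}$.

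The main obstacle is accessibility. Given a nonempty $X\in\mathcal{F}_{G/p}$, so that $X\cup p$ is feasible in $G$, I must produce $x\in X$ with $(X-x)\cup p$ feasible; applying accessibility to $X\cup p$ directly is not enough, since it may strip off $p$ rather than an element of $X$. To keep $p$, I would instead climb upward from the feasible singleton $\{p\}$: repeated use of the exchange property against the larger feasible set $X\cup p$ builds feasible sets $\{p\}=F_0\subsetneq F_1\subsetneq\cdots\subsetneq F_{|X|}=X\cup p$, each contained in $X\cup p$ and each still containing $p$. The penultimate set is then $F_{|X|-1}=(X-x)\cup p$ for some $x\in X$, so $X-x\in\mathcal{F}_{G/p}$, as needed. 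With all three feasibility axioms verified, $G/p$ is a greedoid, finishing the argument.
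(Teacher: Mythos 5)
Your proof is correct, but it takes a genuinely different route from the paper's. You work entirely on the set-system side: you take the feasible sets of $G/p$ from Definition~\ref{D:dc-greed} and invoke the standard cryptomorphism between the rank axioms (Gr0)--(Gr3) and the feasible-set axioms (a fact the paper never states, though it is standard; see \cite{bz}). The ``only if'' direction then falls out of the observation that every greedoid has $\emptyset$ feasible, and you prove the ``if'' direction by hand: exchange transfers verbatim, and you establish accessibility by climbing from the feasible singleton $\{p\}$ up to $X\cup p$ via repeated exchange --- correctly dodging the trap that accessibility applied directly to $X\cup p$ might strip off $p$. The paper instead works on the rank-function side: its $G/p$ is the generalized contraction $(G^*-p)^*$ of Definition~\ref{D:gens}, whose rank function is $r_{G/p}(A)=r(A\cup p)-r(p)$ by Theorem~\ref{T:contract-rank}; the ``if'' direction is delegated to Proposition~\ref{P:dc-greed}(2), and the ``only if'' direction is a one-line computation: $p$ infeasible forces $r(p)=0$, so $r_{G/p}(F-p)=r(F)-r(p)=|F|>|F-p|$, violating subcardinality (Gr2). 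Your version is more self-contained --- it actually supplies the argument hiding behind Proposition~\ref{P:dc-greed}(2), which the paper leaves unproved --- while the paper's is shorter and stays inside its rank-function framework. One caveat worth flagging: when $p$ is infeasible, the set system of Definition~\ref{D:dc-greed} and the rank-function contraction are genuinely different objects (Proposition~\ref{P:dc-greed} identifies them only for feasible $p$), and they fail to be greedoids for different reasons: yours lacks $\emptyset$, the paper's violates (Gr2). Since the paper's own proof uses Theorem~\ref{T:contract-rank}, its $G/p$ is the rank-function one, so recovering the statement in that sense still requires the paper's subcardinality computation. This same distinction explains the hypothesis you call unnecessary: for the rank-function contraction of a greedoid loop one gets $G/p=G-p$, which \emph{is} a greedoid even though $p$ is infeasible, so the assumption that $p$ lies in some feasible set cannot be dropped under the paper's reading, only under yours.
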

\begin{proof}
Suppose $\{p\}$ is feasible.  Then $G/p$ is a greedoid by Proposition~\ref{P:dc-greed}(2) (below).  It remains to show that $G/p$ is not a greedoid when $p$ is not feasible.  Let $F$ be any feasible set containing $p$.  Then, by Theorem~\ref{T:contract-rank}, $r_{G/p}(F-p)=r(F)-r(p)=|F|$.  But this violates the subcardinal property (Gr2) of greedoid rank functions.

\end{proof}

If $p$ is a greedoid loop, it is easy to  check that Definition~\ref{D:dc1} gives $G-p=G/p$.  Definitions~\ref{D:dc1} (based on the rank function) and \ref{D:dc-greed} (based on feasible sets) agree for deletion and contraction in greedoids (where we assume  $p$ is a feasible singleton when defining contraction using \ref{D:dc-greed}).  We omit the straightforward proof.

\begin{prop}\label{P:dc-greed} Let $G=(S,r)$ be a greedoid and let $p \in S$.  Then 
\begin{enumerate}
  \item $G-p$ has rank function $r|_{S-p}$.
  \item If  $p \in S$ is feasible, then $G/p$ has rank function $r_{G/p}$ satisfying $r_{G/p}(A)=r(A\cup p)-r(p).$
\end{enumerate}

\end{prop}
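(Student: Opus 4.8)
The plan is to use the standard greedoid cryptomorphism between feasible sets and the rank function (see \cite{bz}): for any greedoid $H$ with feasible sets $\mathcal{F}(H)$, the rank function is recovered by $r_H(A)=\max\{|F| : F\subseteq A,\ F\in\mathcal{F}(H)\}$, and conversely $\mathcal{F}(H)=\{A : r_H(A)=|A|\}$. Since Definition~\ref{D:dc-greed} specifies $G-p$ and $G/p$ through their feasible sets, in each case I would compute the induced rank from this maximization formula and verify it agrees with the stated expression.

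Part~(1) is immediate. Fix $A\subseteq S-p$. A subset $F\subseteq A$ automatically avoids $p$, so $F$ is feasible in $G-p$ exactly when it is feasible in $G$; the feasible subsets of $A$ are therefore the same whether computed in $G$ or in $G-p$. Taking the maximum cardinality over this common family gives $r_{G-p}(A)=r(A)=r|_{S-p}(A)$.

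For Part~(2), first note that because $p$ is feasible we have $r(p)=|\{p\}|=1$, so the claim is $r_{G/p}(A)=r(A\cup p)-1$. By Definition~\ref{D:dc-greed}(2) the maximization formula reads $r_{G/p}(A)=\max\{|F| : F\subseteq A,\ F\cup p\in\mathcal{F}(G)\}$. The bound $r_{G/p}(A)\le r(A\cup p)-1$ is direct: if $F\subseteq A$ and $F\cup p$ is feasible, then $F\cup p$ is a feasible subset of $A\cup p$ with $p\notin F$, so $|F|+1=|F\cup p|\le r(A\cup p)$. For the reverse bound I must produce a feasible subset of $A\cup p$ of the maximum size $r(A\cup p)$ that contains $p$; deleting $p$ from it yields the desired $F$ with $|F|=r(A\cup p)-1$.

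To build such a set I would invoke the greedoid exchange property for feasible sets (equivalent to the rank axioms (Gr0)--(Gr3); see \cite{bz}). Choose a feasible $H\subseteq A\cup p$ with $|H|=r(A\cup p)$; this family is nonempty since $\{p\}$ is itself a feasible subset of $A\cup p$, so $r(A\cup p)\ge 1$ and the construction is well founded. If $p\in H$ we are done. Otherwise begin with the feasible set $\{p\}$ and repeatedly apply exchange against $H$: whenever the current feasible set $Y$ satisfies $|Y|<|H|$, there is some $x\in H-Y$ with $Y\cup x$ feasible, and since $H\subseteq A$ this keeps $Y$ inside $A\cup p$ with $p\in Y$. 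Iterating until $|Y|=|H|$ produces a feasible subset of $A\cup p$ of size $r(A\cup p)$ containing $p$, as required. This reverse inequality is the only real obstacle: the inclusion-based definition of contraction does not, by itself, guarantee a largest feasible subset of $A\cup p$ passing through $p$, and it is precisely the exchange property that forces one to exist.
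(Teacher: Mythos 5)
Your proof is correct. Note that the paper itself offers no proof to compare against: it dismisses this proposition with ``We omit the straightforward proof,'' so your argument fills a genuine gap rather than paralleling an existing one. Part (1) is indeed immediate, exactly as you say. For part (2), you correctly identified the one point that is not automatic: the inclusion-based definition of contraction gives the upper bound $r_{G/p}(A)\leq r(A\cup p)-1$ for free, but the lower bound requires producing a \emph{maximum-size} feasible subset of $A\cup p$ that passes through $p$, and your use of the feasible-set exchange axiom (growing $\{p\}$ against a maximum feasible subset $H\subseteq A\cup p$, which lies in $A$ when $p\notin H$) is the right mechanism and is carried out correctly. Two remarks. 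First, your argument leans on the cryptomorphism between the paper's rank axioms (Gr0)--(Gr3) and the accessible-plus-exchange axiomatization of feasible sets; that equivalence is standard (it is in the Bj\"orner--Ziegler reference the paper cites) but is external to the paper, which works purely with rank functions. Second, your reading avoids a potential circularity: the paper's Proposition~\ref{P:bad-contract} cites Proposition~\ref{P:dc-greed} to conclude that $G/p$ is a greedoid when $p$ is feasible, and your proof never assumes $G/p$ is a greedoid --- you compute the max-cardinality rank of the set system $\mathcal{F}(G/p)$ directly and only apply exchange inside the greedoid $G$ itself. That is the correct order of logic.
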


If $p$ is in no feasible sets in the greedoid $G$, we call $p$ a {\it greedoid loop}.  Then, by Theorem~\ref{T:contract-rank}, we see $G/p=G-p$ in this case.  

We can formulate dual versions of  the greedoid rank axioms (Gr0) -- (Gr3).  This gives us a direct characterization of duality for these structures.

\begin{thm} [Brylawski]  \label{T:dual-greedoid1}  Let $r$ be the rank function for a greedoid $G=(S,r)$.  Then, for all $B \subseteq S$, the dual rank $r^*$ satisfies the following:

\begin{enumerate}
\item [(Gr0$^*$)] $r^*(\emptyset)=0$ [normalization]
\item [(Gr1$^*$)] $r^*(B\cup p) \leq r^*(B)+1$ [unit rank increase]
\item [(Gr2$^*$)] $r^*(B) \leq r^*(S)$ [rank $S$ maximum]
\item [(Gr3$^*$)] If $r^*(B-p)=r^*(B - q)=r^*(B)-1$, then $r^*(B-\{p,q\})=r^*(B)-2.$  [local rank decrease]
\end{enumerate}

\end{thm}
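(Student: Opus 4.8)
The plan is to verify each of the four dual axioms by unwinding the definition $r^*(X)=|X|+r(S-X)-r(S)$ and rewriting every condition on $r^*$ as a condition on $r$ evaluated on \emph{complementary} sets. The governing device throughout is the substitution $A=S-B$ (or $A=(S-B)-p$): this turns a statement about how $r^*$ behaves near $B$ into a statement about how $r$ behaves near its complement, and I expect each dual axiom to reduce to the correspondingly numbered primal axiom under this correspondence.

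The three ``monotone'' axioms should come out quickly. Axiom (Gr0$^*$) is immediate, since $r^*(\emptyset)=r(S)-r(S)=0$. For (Gr1$^*$) I would assume $p\notin B$ (the case $p\in B$ being trivial) and compute the difference $r^*(B\cup p)-r^*(B)=1+r((S-B)-p)-r(S-B)$; writing $A=(S-B)-p$ so that $S-B=A\cup p$, the increasing axiom (Gr1) gives $r(A)\le r(A\cup p)$, forcing the bracketed difference to be $\le 0$ and hence $r^*(B\cup p)\le r^*(B)+1$. For (Gr2$^*$) I would use $r(\emptyset)=0$ to get $r^*(S)=|S|-r(S)$, so that the claim $r^*(B)\le r^*(S)$ collapses to $r(S-B)\le|S-B|$, which is exactly subcardinality (Gr2) applied to $A=S-B$.

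The crux is (Gr3$^*$), the local rank decrease, and this is where the bookkeeping must be done carefully. First I would note that the hypothesis forces $p,q\in B$ with $p\ne q$ (otherwise some $r^*(B-\cdot)$ equals $r^*(B)$ rather than $r^*(B)-1$). Setting $A=S-B$, an analogous difference computation shows that $r^*(B-p)=r^*(B)-1$ is equivalent to $r(A\cup p)=r(A)$, and $r^*(B-q)=r^*(B)-1$ is equivalent to $r(A\cup q)=r(A)$. These are precisely the hypotheses of local semimodularity (Gr3), which then yields $r(A\cup\{p,q\})=r(A)$. Translating back, and using $S-(B-\{p,q\})=A\cup\{p,q\}$ together with $|B-\{p,q\}|=|B|-2$, I obtain $r^*(B-\{p,q\})=(|B|-2)+r(A)-r(S)=r^*(B)-2$.

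I do not expect a genuine obstacle here: once the complementation $A=S-B$ is in place, every dual axiom is a transcription of its primal counterpart. The only step requiring real attention is (Gr3$^*$), specifically confirming that the hypotheses force $p$ and $q$ to lie in $B$ so that the cardinality drops by exactly two and the complement picks up exactly $\{p,q\}$; after that the argument is a direct appeal to (Gr3).
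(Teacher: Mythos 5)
Your proof is correct and takes essentially the same approach as the paper: the complementation $A=S-B$ (or $A=S-(B\cup p)$) reduces each dual axiom to its primal counterpart, exactly as in the paper's treatment of (Gr1$^*$) and (Gr3$^*$), with your (Gr0$^*$) and (Gr2$^*$) arguments being the routine computations the paper omits. One small caveat: the hypotheses of (Gr3$^*$) do force $p,q\in B$ as you argue, but they do not force $p\neq q$; distinctness must be read as an implicit assumption of the axiom (the paper uses it just as silently when writing $|B-\{p,q\}|=|B|-2$).
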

\begin{proof}
We omit the straightforward proofs of (Gr0$^*$) and (Gr2$^*$).  For (Gr1$^*$), assume $p \notin B$ and set $A=S-(B\cup p)$, so $A \cup p = S-B$.  

Then 
\begin{eqnarray*}
r^*(B\cup p) &=& |B \cup p| + r(S-(B \cup p)) -r(S) \\
&=& |B|+1+r(A)-r(S) \\
& \leq & |B|+1+r(A \cup p) -r(S) \mbox{   (By (Gr1): } r(A)\leq r(A \cup p)) \\
&=& |B|+1+r(S-B) -r(S) \\
&=& r^*(B)+1
\end{eqnarray*}

For (Gr3$^*$), set $A=S-B$.  Then $r^*(B)=r^*(B-p)+1$ implies $r(A)=r(A\cup p)$, and, similarly, $r(A)=r(A \cup q)$.  By (Gr3), we then get $r(A\cup \{p,q\})=r(A)$.  Then 
\begin{eqnarray*}
r^*(B-\{p,q\})&=&|B-\{p,q\}|+r(A\cup\{p,q\})-r(S) \\
&=&|B|-2+r(A)-r(S) \\
&=& |B|+r(S-B)-r(S)-2 \\
&=&r^*(B)-2.
\end{eqnarray*}

\end{proof}

In Example~\ref{E:tree1}, the rooted tree is a greedoid, but its dual is not (in particular, the rank function $r^*$ for $G^*$ was negative for some subsets).  When is the dual of a greedoid also a greedoid?  The answer leads us back to matroids.

\begin{thm} [Brylawski] \label{T:dual-greedoid2}
Let $\mathcal{M}$ be the class of all matroids, $\mathcal{G}$ the class of all greedoids and $\mathcal{G}^*=\{G^*:G \in \mathcal{G}\}$.  Then
$$\mathcal{G} \cap \mathcal{G}^* = \mathcal{M}.$$
\end{thm}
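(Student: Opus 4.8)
The plan is to prove the two inclusions separately, after first recording an intrinsic description of $\mathcal{G}^*$. Since duality is an involution (Proposition~\ref{P:G**}), the condition $H \in \mathcal{G}^*$ --- that $H = G^*$ for some greedoid $G$ --- is equivalent to $H^* \in \mathcal{G}$: one simply takes $G = H^*$ and uses $H = (H^*)^* = G^*$. Consequently $H = (S,r) \in \mathcal{G} \cap \mathcal{G}^*$ precisely when both $r$ and $r^*$ are greedoid rank functions. The whole theorem then reduces to the statement that a function $r$ with $r(\emptyset)=0$ is a matroid rank function if and only if both $r$ and $r^*$ satisfy the greedoid axioms (Gr0)--(Gr3).

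For the inclusion $\mathcal{M} \subseteq \mathcal{G} \cap \mathcal{G}^*$, I would argue as follows. Every matroid satisfies (Gr0)--(Gr3), so $\mathcal{M} \subseteq \mathcal{G}$. Given a matroid $M$, its dual $M^*$ satisfies (R0), (R1) and (R2) and is therefore again a matroid, hence a greedoid; thus $M^* \in \mathcal{G}$, and so $M = (M^*)^* \in \mathcal{G}^*$. This yields $\mathcal{M} \subseteq \mathcal{G} \cap \mathcal{G}^*$.

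The substance is the reverse inclusion $\mathcal{G} \cap \mathcal{G}^* \subseteq \mathcal{M}$. Suppose $r$ and $r^*$ are both greedoid rank functions. Comparing the two axiom lists, the only matroid property a greedoid rank function can fail is the upper half of unit rank increase, $r(A \cup p) \leq r(A)+1$: axiom (R0) coincides with (Gr0), the lower bound $r(A) \leq r(A\cup p)$ is (Gr1), and local semimodularity (R2$'$) is (Gr3). So it suffices to derive $r(A\cup p) \leq r(A)+1$ from the hypotheses, and for this I would use that $r^*$ is increasing, i.e. axiom (Gr1) applied to $r^*$. The key computation is to fix $p \notin B$ and set $A = S-(B\cup p)$, so that $S-B = A \cup p$, and then expand the inequality $r^*(B) \leq r^*(B\cup p)$ using $r^*(X) = |X| + r(S-X) - r(S)$; after cancellation this reduces exactly to $r(A\cup p) \leq r(A) + 1$. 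As $B$ ranges over all subsets avoiding $p$, the complementary set $A$ ranges over all subsets avoiding $p$, so unit rank increase holds for every $A$ and every $p \notin A$. Combined with (Gr0), (Gr1) and (Gr3), this shows $r$ satisfies (R0), (R1) and (R2$'$), hence equivalently (R0), (R1) and (R2). Therefore $r$ is a matroid rank function and $H \in \mathcal{M}$, completing the inclusion.

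I expect the main --- and essentially the only --- obstacle to be this translation step: verifying cleanly that the single greedoid axiom ``$r^*$ is increasing'' is exactly equivalent to ``$r$ has unit rank increase.'' The rest is bookkeeping with the involution and with the already-established equivalence of the matroid axiom sets. Care is needed only with complements and signs in the dual rank formula, and with confirming that the remaining greedoid axioms imposed on $r^*$ (subcardinality and local semimodularity) are either redundant or automatically consistent, so that no hidden extra constraint is quietly being used beyond what a matroid already guarantees.
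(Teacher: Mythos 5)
Your proof is correct and follows essentially the same route as the paper: both establish $\mathcal{M} \subseteq \mathcal{G} \cap \mathcal{G}^*$ by dualizing the inclusion $\mathcal{M} \subseteq \mathcal{G}$, and both obtain the converse by combining the greedoid axioms (Gr0), (Gr1), (Gr3) for $r$ with unit rank increase extracted from the dual greedoid structure. The only difference is organizational: you re-derive inline (via the substitution $A = S - (B \cup p)$) exactly the fact that the paper cites as axiom (Gr1$^*$) from Theorem~\ref{T:dual-greedoid1}.
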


\begin{proof}
Matroids are greedoids, so $\mathcal{M}\subseteq \mathcal{G}$. Taking the duals gives $\mathcal{M}^*  \subseteq \mathcal{G}^*$.  Since duals of matroids are matroids, we also have $\mathcal{M}^* = \mathcal{M}$, so $\mathcal{M}\subseteq \mathcal{G}\cap \mathcal{G}^*$.

For the converse, note that if $G \in  \mathcal{G}\cap \mathcal{G}^*$, then the rank function $r$ for $G$ satisfies (Gr0), (Gr1$^*$) and (Gr3).  But these three properties characterize the rank function of a matroid, so $G$ is a matroid, i.e., $\mathcal{G}\cap \mathcal{G}^* \subseteq \mathcal{M}$.

\end{proof}

When $G$ is a rooted graph, recall that a subset of edges $A$ is a  feasible set in the branching greedoid if the edges of $A$ form a rooted tree.  Example~\ref{E:tree1} shows the dual $G^*$ is not generally a greedoid.  But we can determine precisely when $r^*(A)\geq 0$ for all $A \subseteq S$ in this case.

\begin{prop}\label{P:dual-rank-rooted-graph}
Let $G$ be a connected rooted graph with edges $S$ and branching greedoid rank function $r$.  Then $r^*(A) \geq 0$ for all $A \subseteq S$ if and only if every vertex of $G$ is adjacent to the root.
\end{prop}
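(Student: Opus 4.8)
The plan is to reduce the dual-rank inequality $r^*(A)\ge 0$ to a transparent counting statement about the connected component of the root, after which both directions follow from a one-line injection and a single choice of $A$.

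First I would record two facts about the branching greedoid rank. Since $G$ is connected, the largest rooted subtree of $G$ is a spanning tree, so $r(S)=|V|-1$. More generally, for any $B\subseteq S$ the largest rooted subtree contained in $B$ spans exactly the connected component of the root in the subgraph $(V,B)$; writing $R(B)$ for the vertex set of that component, we have $r(B)=|R(B)|-1$. Substituting $B=S-A$ into $r^*(A)=|A|+r(S-A)-r(S)$ and simplifying gives
$$r^*(A)=|A|+\big(|R(S-A)|-1\big)-\big(|V|-1\big)=|A|-|U(A)|,$$
where $U(A):=V\setminus R(S-A)$ is the set of vertices that are \emph{not} joined to the root using the edges of $S-A$. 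Thus the entire proposition is equivalent to the assertion that $|A|\ge |U(A)|$ for every $A\subseteq S$ iff every vertex is adjacent to the root.

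For the forward implication I would assume every vertex is adjacent to the root and fix an arbitrary $A\subseteq S$. For each $w\in U(A)$ the vertex $w$ is separated from the root in $(V,S-A)$, so the edge joining $w$ to the root (which exists by hypothesis) cannot lie in $S-A$, and therefore lies in $A$. Sending each $w$ to this edge gives an injection $U(A)\hookrightarrow A$, whence $|A|\ge |U(A)|$ and $r^*(A)\ge 0$. For the converse I would argue by contrapositive: if some vertex $v$ is not adjacent to the root, take $A$ to be the set of all edges incident to the root. Then $S-A$ contains no edge at the root, so the root is isolated in $(V,S-A)$ and $U(A)=V\setminus\{\text{root}\}$ has size $|V|-1$, while $|A|$ equals the number of neighbours of the root, which is at most $|V|-2$ since $v$ is not among them. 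Hence $r^*(A)=|A|-|U(A)|\le(|V|-2)-(|V|-1)<0$.

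The only real content lies in the first paragraph: once the identity $r^*(A)=|A|-|U(A)|$ is in hand, both directions are immediate. The step I would watch most carefully is therefore the component interpretation $r(B)=|R(B)|-1$ together with the collapse of the dual-rank formula, as this is where connectivity of $G$ (giving $r(S)=|V|-1$) is genuinely used. I would also note that the counting in both directions treats $G$ as simple, using one edge per neighbour of the root; the forward injection survives parallel edges, so the substantive place where simplicity enters is the degree bound in the converse.
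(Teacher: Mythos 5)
Your proof is correct, and it shares its skeleton with the paper's: your converse is the paper's argument verbatim (take $A$ to be the set of edges at the root, observe the root is then isolated in $(V,S-A)$, and count), while the forward direction is organized differently. The paper never computes $r^*$ exactly; it takes the star of edges $e_1,\dots,e_n$ at the root, splits them into those inside $A$ (giving $|A|\ge k$) and those outside (giving $r(S-A)\ge n-k$), and adds inequalities. You instead establish the exact identity $r^*(A)=|A|-|U(A)|$, where $U(A)$ is the set of vertices separated from the root by deleting $A$, and then deduce the forward direction from the injection $U(A)\hookrightarrow A$ sending each separated vertex to its root edge. That identity is a genuine addition: it is the rooted-graph analogue of the antimatroid interpretation $r^*(A)=-|\overline{A}-A|$ of Theorem~\ref{T:anti-rank}, and it recasts the proposition as a pure cut-counting statement (every root-avoiding vertex set must have edge cut at least its own size). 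Your closing caveat about simplicity is also substantive rather than cosmetic: the proposition genuinely fails for multigraphs --- for instance, join the root to two vertices by doubled edges and hang a pendant vertex off one of them; one checks every root-avoiding vertex set then has edge cut at least its cardinality, so $r^*(A)\ge 0$ for all $A$, yet the pendant vertex is not adjacent to the root --- and the paper's converse makes the same implicit assumption when it sets $|A|=n$ and $r(S-A)=0$ for the set of root edges. So you have correctly located the one hypothesis both proofs silently use.
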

\proof
Let  $V=\{v_0, v_1, v_2, \dots, v_n\}$ be the collection of vertices of $G$, with root vertex $v_0$, and suppose $v_0$ is adjacent to each $v_i$ for $1 \leq i \leq n$.  Write $e_i$ for the edge joining vertices $v_0$ and $v_i$.  Let $A\subseteq S$ and suppose $e_i \in A$ for $1 \leq i \leq k$ and $e_j \notin A$ for $k+1 \leq j \leq n$ (for some $k$).  Then $r(S)=n, |A|\geq k$ and $r(S-A)\geq n-k$, so $$r^*(A)=|A|+r(S-A)-r(S)\geq k+(n-k)-n \geq 0.$$

For the converse, suppose the root vertex $v_0$ is adjacent to vertices $v_1, v_2, \dots, v_n$, but there is some vertex $u$ that  is not adjacent to $v_0$.  Let $e_i$ be the edge joining $v_0$ and $v_i$ (as above), and set $A=\{e_1,e_2,\dots, e_n\}$.  Then $|A|=n$ and $r(S-A)=0$.  Since $G$ is connected and has at least $n+1$ non-root vertices, we must have $r(S)>n$.  Thus, $r^*(A)=n+0-r(S)<0.$
\qed

\section{Antimatroids, demi-matroids  and duality}\label{S:matroid-gen}
\subsection{Antimatroids} Antimatroids are an important class of greedoids that have been rediscovered many times in the literature; they   originally appeared in \cite{d} in 1940.  An interesting account of the history of the different formulations and discoveries of antimatroids appears in \cite{m}.  For our purposes, an antimatroid is a greedoid in which the union of feasible sets is always feasible.

\begin{defn}\label{D:anti}  Let $G=(S,r)$ be a greedoid with rank function $r$ and feasible sets $\mathcal{F}$.  Then $G$ is an {\it antimatroid} if $F_1 \cup F_2 \in \mathcal{F}$ whenever $F_1, F_2 \in \mathcal{F}$.
\end{defn}

There are several important combinatorial structures that admit an antimatroid structure in a natural way: a partial list includes trees, rooted trees, rooted directed trees, finite subsets of Euclidean space, posets (in three different ways), and vertices in chordal graphs.  If the antimatroid has no greedoid {\it loops} (elements in no feasible set), then the antimatroid is a full greedoid, i.e., $S$ is a feasible set.  All of the antimatroids listed above are full.

We are interested in interpreting the dual rank function of an antimatroid directly from the antimatroid structure.  One obvious problem is the presence of subsets with negative rank.  In fact, in Example~\ref{E:tree1}, the dual rank  $r^*(A)\leq 0$ for all $A\subseteq S$ (see Table~\ref{Ta:ex1-dual}).  This is true generally when the greedoid is full.

\begin{prop}\label{P:neg-rank}
Let $G=(S,r)$ be a full greedoid.  Then $r^*(A)\leq 0$ for all $A \subseteq S$.
\end{prop}
\proof
$G$ is a full greedoid if and only if $r(S)=|S|$.  Then $r^*(A)=|A|+r(S-A)-r(S)=r(S-A)-|S-A|\leq 0$ by the subcardinal greedoid rank property (Gr2).

\qed

When $G$ is an antimatroid with $S$ feasible, we say a subset $C \subseteq S$ is {\it convex} if $S-A$ is feasible.  The convex sets provide a complementary way to study antimatroids, and arise naturally in a variety of combinatorial structures.  See Example~\ref{E:tree2}.

\begin{defn}\label{D:convex-closure}  Let $(S,r)$ be an antimatroid and let $A \subseteq S$.  The {\it convex closure} $\overline{A}$ is the smallest convex set that contains $A$.
\end{defn}

Equivalently, $\overline{A}$ is the intersection of all convex sets containing $A$.

\begin{exmp}\label{E:tree2}  Let $T$ be the (non-rooted) tree  in Figure~\ref{F:tree2}.  Let $S$ be the set of edges of the tree, and define $A \subseteq S$ to be feasible if $S-A$ is a subtree.  Then this gives an antimatroid structure, where the rank function $r(A)$ is the size of the largest feasible subset of $A$.  Then $C$ is convex if and only if the edges of $C$ form a subtree.

\begin{figure}[h]
\centerline{\includegraphics[width=2.75in]{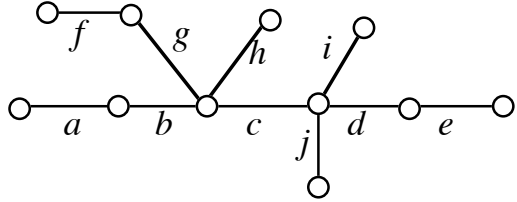}}
\caption{Tree for Example~\ref{E:tree2}.}
\label{F:tree2}
\end{figure}

This is the {\it pruning} antimatroid associated to the tree.  For instance, the subset $A=\{a,d,e,f\}$ is feasible since its complement $S-A=\{b,c,g,h,i,j\}$ is a subtree.  Thus, $S-A$ is convex.  Equivalently, $A$ is feasible if the edges of $A$ can be {\it pruned} from the tree by repeatedly removing leaves.  Since $A$ is feasible, we have $r(A)=4$, and $r^*(S-A)=|S-A|+r(A)-r(S)=0.$  

For $A=\{b,e,h\}$, we find $A$ is not feasible.  Then $r(A)=2$ since the subset $\{e,h\}$ is the largest feasible subset of $A$,    We also see that $A$ is not convex.  The smallest subtree containing $A$ is $\{b,c,d,e,h\}$, so $\overline{A}=\{b,c,d,e,h\}$. 

\end{exmp}

We now give a combinatorial interpretation for the dual rank in an antimatroid.

\begin{thm}\label{T:anti-rank}  Let $G=(S,r)$ be a full antimatroid.  Then $r^*(A)=-|\overline{A}-A|$, where $\overline{A}$ is the convex closure of $A$.

\end{thm}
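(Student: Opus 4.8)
The plan is to use fullness to convert the dual rank into a nullity on the complement, and then to identify the convex closure $\overline{A}$ with the complement of the maximal feasible subset of $S-A$.

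First I would apply the hypothesis that $G$ is full, so $r(S)=|S|$. Substituting into $r^*(A)=|A|+r(S-A)-r(S)$ collapses the expression to $r^*(A)=r(S-A)-|S-A|$; that is, $r^*(A)$ is the negative of the nullity $|S-A|-r(S-A)$. So the theorem reduces to proving $|S-A|-r(S-A)=|\overline{A}-A|$.

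Next I would show that in an antimatroid the rank of any set $B$ equals the size of its unique largest feasible subset. Because $G$ is an antimatroid, the feasible subsets of $B$ are closed under union (Definition~\ref{D:anti}), so the union $F$ of all feasible subsets of $B$ is itself feasible and contained in $B$; hence $F$ is the unique maximal feasible subset and $r(B)=|F|$. Taking $B=S-A$, let $F$ be the maximal feasible subset of $S-A$, so that $r(S-A)=|F|$.

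The key step is then to recognize $\overline{A}=S-F$. Since $C$ is convex exactly when $S-C$ is feasible, and $A\subseteq C \iff S-C\subseteq S-A$, the convex sets containing $A$ are precisely the complements $S-E$ of feasible sets $E\subseteq S-A$; the smallest such $C$ corresponds to the largest such $E$, namely $F$. Thus $\overline{A}=S-F$, giving $|\overline{A}|=|S|-|F|=|S|-r(S-A)$. As $A\subseteq\overline{A}$, we have $|\overline{A}-A|=|\overline{A}|-|A|=|S|-r(S-A)-|A|=|S-A|-r(S-A)=-r^*(A)$, as desired.

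I expect the main obstacle to be the clean justification in the previous two paragraphs that the maximal feasible subset of $S-A$ exists and computes the rank, and that it is exactly the complement of $\overline{A}$: this is precisely where the antimatroid union-closure hypothesis is indispensable (it can fail for general greedoids), and it is what makes the convex-closure interpretation go through.
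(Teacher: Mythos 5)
Your proof is correct, but its key step takes a genuinely different route from the paper's. Both arguments use fullness the same way ($r(S)=|S|$, so $r^*(A)$ is minus the nullity of $S-A$), and both lean on the standard greedoid fact that $r(B)$ equals the size of a largest feasible subset of $B$: the paper uses it implicitly when it concludes $r(S-\overline{A})=r(S-A)$ from the claim that every feasible subset of $S-A$ lies in $S-\overline{A}$, and you use it to write $r(S-A)=|F|$ (note that your union-closure argument establishes uniqueness of the maximal feasible subset, but the equality $r(B)=|F|$ itself still rests on this rank/feasible-set cryptomorphism, which neither proof spells out). The difference is how the convex closure enters. The paper proves the intermediate identity $r(S-A)=r(S-\overline{A})$ by contradiction: given a feasible $F\subseteq S-A$ meeting $\overline{A}$, it uses accessibility of feasible sets to produce a feasible $F'\cup x$ with $F'\subseteq S-\overline{A}$, then union-closure to make $(S-\overline{A})\cup x$ feasible, so that $\overline{A}-x$ would be a smaller convex set containing $A$. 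You instead observe that complementation is an inclusion-reversing bijection between convex sets containing $A$ and feasible subsets of $S-A$, so that $\overline{A}=S-F$ where $F$ is the union of all feasible subsets of $S-A$, which is feasible by the antimatroid axiom alone. Your route avoids accessibility entirely, is direct rather than by contradiction, and as a byproduct shows that the convex closure is well defined (the smallest convex superset exists); the paper's route isolates the rank identity $r(S-A)=r(S-\overline{A})$, which has some independent interest, but at the cost of invoking an extra structural property of greedoids.
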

\proof  We first show $r(S-A)=r(S-\overline{A})$.  Since $S-\overline{A} \subseteq S-A$, we know $r(S-\overline{A}) \leq r(S-A)$ (from greedoid rank property (Gr1)).  Since $\overline{A}$ is convex, we know $S-\overline{A}$ is feasible, so $r(S-\overline{A})=|S-\overline{A}|.$

Now suppose $F \subseteq (S-A)$ is feasible with $x \in F \cap \overline{A}$.  Then we can find a feasible set $F'$ such that $F' \subseteq S-\overline{A}$ and $F' \cup x$ is feasible.  (This follows from the fact that all feasible sets are {\it accessible} -- simply remove elements from $F$, one by one, maintaining feasibility.)

But then $F' \cup x$ and $S-\overline{A}$ are both feasible, so $F'\cup x \cup (S-\overline{A})=(S-\overline{A})\cup x$ must be feasible.   But this gives $\overline{A}-x$ convex, with $A \subseteq \overline{A}-x$.  Thus, $\overline{A}$ is not the smallest convex set containing $A$, a contradiction.  We conclude that every feasible subset $F$ of $S-A$ is a subset of $S-\overline{A}$, so $r(S-\overline{A})=r(S-A)$.

Now 
\begin{eqnarray*}
r^*(A)&=&|A|+r(S-A)-r(S) \\
&=& |A|+r(S-\overline{A})-r(S) \\
&=& |A|+|S-\overline{A}|-|S| \\
&=& -|\overline{A}-A|.
\end{eqnarray*}

\qed

For example, let $A=\{a,d,f\}$ in the tree in Figure~\ref{F:tree2}.  Then $r(S-A)=4$ since $\{e,h,i,j\}$ is the largest feasible subset of $S-A$, so $r^*(A)=|A|+r(S-A)-r(S)=3+4-10=-3$.  

Now $\overline{A}$ is the smallest subtree containing $A$, so $\overline{A}=\{a,b,c,d,f,g\}$.  Thus $\overline{A}-A=\{b,c,g\}$, and $|\overline{A}-A|=3$, as required by Theorem~\ref{T:anti-rank}.

An immediate corollary of Theorem~\ref{T:anti-rank} is the following characterization of convex sets in an antimatroid in terms of the dual rank $r^*$.

\begin{cor}
Let $G=(S,r)$ be a full antimatroid. Then $C$ is convex if and only if $r^*(C)=0$.
\end{cor}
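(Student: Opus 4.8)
The plan is to derive the corollary directly from Theorem~\ref{T:anti-rank}, since the computation of $r^*$ has already been carried out there; what remains is simply to unwind the equality $r^*(A)=-|\overline{A}-A|$ into a statement about convexity.

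First I would apply Theorem~\ref{T:anti-rank} with $A=C$ to obtain $r^*(C)=-|\overline{C}-C|$. Because cardinalities are nonnegative, this forces $r^*(C)=0$ exactly when $|\overline{C}-C|=0$, that is, exactly when $\overline{C}-C=\emptyset$. Since the convex closure always satisfies $C\subseteq\overline{C}$ (by Definition~\ref{D:convex-closure}), the condition $\overline{C}-C=\emptyset$ is equivalent to $\overline{C}=C$.

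Next I would argue that $\overline{C}=C$ holds precisely when $C$ is convex. If $C$ is convex, then $C$ is itself a convex set containing $C$, so the smallest such set satisfies $\overline{C}\subseteq C$, giving $\overline{C}=C$. Conversely, if $\overline{C}=C$, then $C$ coincides with a set that is convex by construction, so $C$ is convex. Chaining these equivalences yields the claim.

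I do not anticipate any genuine obstacle: the substantive content lives entirely in Theorem~\ref{T:anti-rank}, and the result is flagged as an immediate consequence. The only point requiring a moment's care is the last equivalence between $\overline{C}=C$ and the convexity of $C$, which rests solely on the defining property of $\overline{C}$ as the smallest convex set containing $C$.
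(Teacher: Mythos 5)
Your proof is correct and matches the paper's intent exactly: the paper states this as an immediate corollary of Theorem~\ref{T:anti-rank} without writing out a proof, and your argument supplies precisely the routine details (nonnegativity of $|\overline{C}-C|$ plus the equivalence $\overline{C}=C \Leftrightarrow C$ convex) that the paper leaves implicit.
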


\subsection{Demi-matroids}Demi-matroids were introduced in \cite{bjms}, where they provide a more general setting for Wei's duality theorem for codes \cite{w}. 

\begin{defn} \label{D:demi} A {\it demi-matroid} is a triple $(S,r,s)$ with $S$ a finite set and rank functions $r,s:2^S\to \mathbb{Z}^+\cup\{0\}$ satisfying
\begin{enumerate}
  \item $r(A) \leq |A|$ and $s(A) \leq |A|$
  \item If $A \subseteq B$, then $r(A) \leq r(B)$ and $s(A) \leq s(B)$.
  \item $|S-A|-r(S-A)=s(S)-s(A)$.
\end{enumerate} 

\end{defn}

It follows immediately from this definition that $r$ and $s$ also satisfy the complementary version of (3):
$$|S-A|-s(S-A)=r(S)-r(A).$$

If $M$ is a matroid with rank function $r$, then the function $s$ is simply the dual rank $r^*$.  Thus, matroids are demi-matroids, where $s=r^*$.  However, if $S$ is a finite set with arbitrary rank function $r$, then the generalized dual rank function $r^*$ of Definition~\ref{D:gens} need not satisfy the properties required of $s$.

For instance, consider the greedoid of Example~\ref{E:tree1}.  Then $r(A)$ is a non-negative integer for any subset $A$, and the greedoid rank function satisfies properties (1) and (2) of the demi-matroid properties (Definition~\ref{D:demi}).  Further, $r$ and the dual rank function $r^*$ satisfy (3):
$$|S-A|-r(S-A)=r^*(S)-r^*(A).$$
However, the dual rank function $r^*$ does not satisfy  (2) and $r^*(A)<0$ for $A=\{a\}$.  (Definition~\ref{D:demi}(2) is violated for $A= \{c\}$ and $B=\{a,c\}$.)

 We are interested in characterizing demi-matroids via the rank function $r$.  We will need the following lemma, whose straightforward inductive proof is omitted.

\begin{lem}\label{L:ranklemma}  Let $S$ be a finite set with rank function $r:2^S \to \mathbb{Z}^+\cup\{0\}$ satisfying the subcardinal property:  $r(A) \leq r(B)$ whenever $A \subseteq B$.  Then the following two properties are equivalent:
\begin{enumerate}
\item [(R1)] $r(A) \leq r(A \cup p) \leq r(A)+1$ [unit rank increase]
  \item[(MN)] If $A \subseteq B$, then  $|A|-r(A) \leq |B|-r(B)$. [monotone nullity]
\end{enumerate}
\end{lem}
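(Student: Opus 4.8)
The plan is to prove the two implications separately, keeping the standing hypothesis ``$r(A)\le r(B)$ whenever $A\subseteq B$'' (monotonicity) available throughout. Note that, despite the label in the statement, this hypothesis is monotonicity rather than the subcardinal bound $r(A)\le|A|$, and it is exactly what supplies the lower bound in (R1) for the converse direction; I flag this because the assumption genuinely cannot be dropped.

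For the implication (R1) $\Rightarrow$ (MN), I would fix $A\subseteq B$ and build a chain from $A$ up to $B$ by adjoining the elements of $B\setminus A$ one at a time. Writing $B\setminus A=\{p_1,\dots,p_k\}$ and $A_i=A\cup\{p_1,\dots,p_i\}$, so that $A_0=A$ and $A_k=B$, each step increases cardinality by exactly one while, by the upper inequality of (R1), $r(A_i)\le r(A_{i-1})+1$. Hence the nullity does not decrease at any step: $|A_i|-r(A_i)\ge |A_{i-1}|-r(A_{i-1})$. Telescoping over $i=1,\dots,k$ then gives $|B|-r(B)\ge|A|-r(A)$, which is precisely (MN). This is a one-line induction on $k$ once the single-step inequality is in hand, and only the upper bound in (R1) is used.

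For the converse (MN) $\Rightarrow$ (R1), the lower inequality $r(A)\le r(A\cup p)$ is immediate from monotonicity, since $A\subseteq A\cup p$. For the upper inequality I would apply (MN) directly to the pair $A\subseteq A\cup p$: if $p\in A$ there is nothing to prove, so assume $p\notin A$, and then (MN) yields $|A|-r(A)\le |A\cup p|-r(A\cup p)=|A|+1-r(A\cup p)$, which rearranges to $r(A\cup p)\le r(A)+1$. Combining the two bounds gives (R1).

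I do not expect a genuine obstacle here, as both directions are short. The only points requiring care are the bookkeeping for the telescoping chain in the forward direction and the trivial case $p\in A$ in the converse; the single real subtlety is remembering that the monotonicity hypothesis, not (R1) itself, is what furnishes the lower inequality of (R1) when reconstructing it from (MN).
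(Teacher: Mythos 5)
Your proof is correct: the paper omits the proof of this lemma, describing it only as a ``straightforward inductive proof,'' and your telescoping induction along a chain from $A$ to $B$ for (R1) $\Rightarrow$ (MN), together with the direct application of (MN) to the pair $A \subseteq A\cup p$ for the converse, is exactly that argument. Your side remark is also accurate: the hypothesis the statement calls ``subcardinal'' is really monotonicity of $r$, and it is precisely what furnishes the lower inequality of (R1) in the converse direction, so it cannot be dropped.
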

We also point out that monotone nullity (propertry (MN)) can be expressed in other ways.  For instance, it is immediate that this property is equivalent to 
$$A \subseteq B \Rightarrow r(B)-r(A)\leq |B-A|.$$

\begin{thm}\label{T:demi}  Let $S$ be a finite set with rank function $r:2^S\to \mathbb{Z}$.  Then the triple $(S,r,r^*)$ is a demi-matroid if and only if $r$ satisfies, for all $p \in S$ and  $A, B \subseteq S$:
\begin{enumerate}
  \item [(a)] $ 0 \leq r(A) \leq |A|$, [nonnegative, subcardinal]
  \item[(b)] if $A\subseteq B$, then $r(A) \leq r(B)$, [monotone rank]
  \item[(c)] $r(A \cup p) \leq r(A)+1$ [unit rank increase]
\end{enumerate}

\end{thm}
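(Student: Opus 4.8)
The plan is to route both implications through Lemma~\ref{L:ranklemma} and through a single observation: monotonicity of the dual rank $r^*$ is precisely monotone nullity (MN) of $r$. First I would record the standing setup. Condition (a) forces $r(\emptyset)=0$ (since $0\le r(\emptyset)\le|\emptyset|=0$), and this also holds whenever $(S,r,r^*)$ is a demi-matroid, because demi-matroid property (1) gives $r(\emptyset)\le 0$ while the range $\mathbb{Z}^+\cup\{0\}$ gives $r(\emptyset)\ge 0$. Hence in both directions the complementary identities from the proof of Theorem~\ref{T:tutte-gen}(2) are available, namely $|A|-r^*(A)=r(S)-r(S-A)$ and $r^*(S)-r^*(A)=|S-A|-r(S-A)$. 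The second identity shows demi-matroid property (3) holds automatically for the pair $(r,r^*)$, so that property never needs to be checked.

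Next I would isolate the key equivalence. For $A\subseteq B$, setting $C=S-B\subseteq S-A=D$, a direct expansion gives $r^*(B)-r^*(A)=|D-C|-\bigl(r(D)-r(C)\bigr)$. Since complementation is a bijection on subsets, $r^*$ is monotone (demi-matroid property (2) for $r^*$) if and only if $r(D)-r(C)\le|D-C|$ for all $C\subseteq D$, which is exactly (MN). This equivalence drives both directions.

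For the forward direction, assume $(S,r,r^*)$ is a demi-matroid. Condition (a) is demi-matroid property (1) together with the nonnegativity built into the range, and (b) is property (2) for $r$. For (c), property (2) for $r^*$ gives monotonicity of $r^*$, hence (MN) by the key equivalence; taking $B=A\cup p$ in the form $r(B)-r(A)\le|B-A|$ yields $r(A\cup p)-r(A)\le 1$, which is (c). For the converse, assume (a), (b), (c). Using (b) as the monotonicity hypothesis of Lemma~\ref{L:ranklemma} and (c) as the right half of (R1), the lemma delivers (MN). Then I verify the three demi-matroid properties for $(S,r,r^*)$: property (3) is automatic; property (2) is (b) for $r$ and is (MN) for $r^*$ via the key equivalence; and for property (1), $r(A)\le|A|$ is (a), while $r^*(A)\le|A|$ follows from $|A|-r^*(A)=r(S)-r(S-A)\ge 0$, using (b). Finally I must confirm that both $r$ and $r^*$ land in $\mathbb{Z}^+\cup\{0\}$: $r\ge 0$ is (a), and $r^*(A)\ge 0$ follows from $|A|-r^*(A)=r(S)-r(S-A)\le|A|$, which is (MN) applied to $S-A\subseteq S$.

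The main obstacle is the nontrivial implication of Lemma~\ref{L:ranklemma} (deriving (MN) from (R1)), which the excerpt already grants; with that in hand the argument is organized bookkeeping around the equivalence ``$r^*$ monotone $\iff$ $r$ satisfies (MN).'' The one point demanding genuine care is the pair of nonnegativity requirements $r\ge 0$ and $r^*\ge 0$ imposed by the demi-matroid definition, since the dual nonnegativity is easy to omit and is exactly where monotone nullity is needed.
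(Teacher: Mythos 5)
Your proof is correct and follows essentially the same route as the paper's: both directions are driven by Lemma~\ref{L:ranklemma} together with the complementation identity showing that monotonicity of $r^*$ is equivalent to monotone nullity (MN) of $r$, with demi-matroid property (3) holding automatically. Your only departures are organizational --- stating the key equivalence once and reusing it in both directions, and explicitly noting that $r(\emptyset)=0$ holds under either hypothesis (a point the paper uses silently when invoking Definition~\ref{D:gens}) --- which is a slight tightening rather than a different argument.
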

\proof  Suppose $r$ satisfies the three conditions (a), (b) and (c).  We show the triple $(S,r,r^*)$ is a demi-matroid.  From Definition~\ref{D:gens}, we have $r^*(A)=r(S-A)+|A|-r(S)$.  This immediately  implies that (3) holds in Definition~\ref{D:demi}:  $|S-A|-r(S-A)=r^*(S)-r^*(A)$ for any $A \subseteq S$.  Thus, to show $(S,r,r^*)$ is a demi-matroid, we must show the dual rank $r^*$ also satisfies (for all subsets $A, B \subseteq S$):
\begin{enumerate}
  \item $0\leq r^*(A)\leq |A|$, and
  \item if $A \subseteq B$, then $r^*(A)\leq r^*(B)$.
\end{enumerate}

By Lemma~\ref{L:ranklemma}, we may assume $r$ satisfies the monotone nullity property (MN); it will be easier to use this property in the proof.  First, we show $0\leq r^*(A)$ for all $A$.  But  $0 \leq r^*(A)$ if and only if $0\leq r(S-A)+|A|-r(S)$, i.e., $r(S)\leq r(S-A)+|A|$.  This now follows from monotone nullity (Lemma~\ref{L:ranklemma}):  setting $A'=S-A$ and $B'=S$, we have $A' \subseteq B'$, so we have  $|A'|-r(A') \leq |B'|-r(B')$.  Rewriting:  $|S-A|-r(S-A) \leq |S|-r(S)$, which is clearly equivalent to $r(S)\leq r(S-A)+|A|$.

To show $r^*(A) \leq |A|$, we note this is equivalent to  $r(S-A)+|A|-r(S) \leq |A|$, i.e., $r(S-A)\leq r(S)$.  This now follows directly from condition (b).

It remains to show that if $A \subseteq B$, then $r^*(A) \leq r^*(B)$.  Now 
\begin{eqnarray*}
r^*(A) \leq r^*(B) & \Leftrightarrow& r(S-A)+|A|-r(S) \leq r(S-B)+|B|-r(S)\\
& \Leftrightarrow& r(S-A)-r(S-B) \leq |S-A|-|S-B| \\
& \Leftrightarrow& |B'|-r(B') \leq |A'|-r(A')
\end{eqnarray*}
where $A'=S-A$ and $B'=S-B$, with $B' \subseteq A'$.  This now follows from monotone nullity (Lemma~\ref{L:ranklemma}).

For the converse, we first observe that if $(S,r,s)$ is a demi-matroid, then (3) in Definition~\ref{D:demi} forces $s=r^*$, where $r^*(A)=r(S-A)+|A|-r(S)$.  Then  $r$ must satisfy conditions (a) and (b) (this follows from Def~\ref{D:demi}(1) and (2)).

It remains to show $r$ also satisfies the monotone nullity property (MN) (Lemma~\ref{L:ranklemma}).  Assume $A \subseteq B$.  Then the argument given above shows that 
$r$ satisfies (MN) if and only if $r^*$ satisfies Def~\ref{D:demi}(2):  $$|A|-r(A) \leq |B|-r(B) \Leftrightarrow r^*(S-B)\leq r^*(S-A).$$  Since $S-B \subseteq S-A$ and $(S,r,r^*)$ is a demi-matroid, we know $r^*(S-B)\leq r^*(S-A)$.  This completes the proof.

\qed

In Example~\ref{E:tree1}, the greedoid rank function is nonnegative and subcardinal, so it satisfies condition (a) of Theorem~\ref{T:demi}.  Further, $r$ satisfies the monotone rank property (condition (b)).  But $r$ does not satisfy the unit rank increase property (R1).  (Set $A=\{b\}$ and $B=\{a,b\}$, for instance.)    This confirms the fact that the greedoid of Example~\ref{E:tree1} is not a demi-matroid.  (Of course, $r$ does not satisfy the monotone nullity property, either, which is violated for the same $A$ and $B$.)

Lemma 1 of ~\cite{bjms} shows that the rank function $r$ of a demi-matroid satisfies the unit rank increase property (R1).   It's also worth pointing out that the three properties (a), (b) and (c) of Theorem~\ref{T:demi} are not sufficient to define a matroid.  For instance, Example 2 of \cite{bjms} has $S=\{a,b\}$, with $r(A)=0$ for $A=\emptyset, \{a\}$ or $\{b\}$, and $r(S)=1$. Then $r^*=r$, and  $(S,r,r^*)$ is a demi-matroid, so $r$ and $r^*$ satisfy  (a), (b) and (c) from Theorem~\ref{T:demi}.  But this is not a matroid; the semimodular property (R2) is violated.


\begin{thebibliography}{99}


\bibitem{bz}
A. Bj\"{o}rner and G. Ziegler, ``Introduction to greedoids,''  {\it Matroid Applications}, 284--357, Encyclopedia Math. Appl., {\bf 40}, Cambridge Univ. Press, Cambridge, 1992

\bibitem{bd}
R. Bland and B. Dietrich,  ``An abstract duality,''
{\it Discrete Math.} {\bf 70} (1988),  203--208. 

\bibitem{bjms}
T. Britz, T. Johnsen, D. Mayhew  and K Shiromoto, ``Wei-type duality theorems for matroids,'' to appear in {\it Designs, Codes and Cryptography}.


\bibitem{br} T. Brylawski, private communication.

\bibitem{bo}
T. Brylawski and J. Oxley, ``The Tutte polynomial and its applications,''  {\it Matroid Applications}, 123--225, Encyclopedia Math. Appl., {\bf 40}, Cambridge Univ. Press, Cambridge, 1992

\bibitem{d}
R. P. Dilworth, ``Lattices with unique irreducible decompositions,''
{\it Ann. of Math. (2)} {\bf 41} (1940), 771--777. 

\bibitem{gm}
G. Gordon and E. McMahon, ``A greedoid polynomial which distinguishes rooted arborescences,''  {\it Proc. Amer. Math. Soc.} {\bf 107} (1989),  287--298.

\bibitem{thb}
G. Gordon and J. McNulty, ``Thomas H. Brylawski (1944--2007),'' {\it European J. of Comb}. {\bf 32} (2011), 712--721.

\bibitem{gt}
G. Gordon and L. Traldi, ``Generalized activities and the Tutte polynomial,'' {\it Discrete Math.} {\bf 85} (1990), 167--176.

\bibitem{kl}
B. Korte and L.  Lov\'{a}sz, ``Mathematical structures underlying greedy algorithms,''  {\it Fundamentals of computation theory} (Szeged, 1981), pp. 205Ð209, Lecture Notes in Comput. Sci., {\bf 117} Springer, Berlin-New York, 1981.

\bibitem{k}
J. Kung, ``A characterization of orthogonal duality in matroid theory,''
{\it Geom. Dedicata} {\bf 15} (1983), 69--72. 

\bibitem{m}
B. Monjardet,
``A use for frequently rediscovering a concept,''
{\it Order} {\bf 1} (1985),   415--417. 

\bibitem{w}
V. Wei, ``Generalized Hamming weights for linear codes,'' {\it IEEE Trans. Inform. Theory} {\bf 37} (1991), 1412--1418. 

\bibitem{wh}
G. Whittle, ``Duality in polymatroids and set functions,''
{\it Combin. Probab. Comput.} {\bf 1} (1992), 275--280.

\end{thebibliography}
\end{document}